\documentclass[10pt]{amsart}
\usepackage[utf8]{inputenc}
\usepackage{latexsym}
\usepackage{amscd, amsfonts, eucal, mathrsfs, amsmath, amssymb, amsthm}
\input xy
\xyoption{all}






\newcommand{\field}[1]{\mathbf #1}
\newcommand{\mf}[1]{\mathfrak #1}
\newcommand{\mc}[1]{\mathcal #1}
\newcommand{\ms}[1]{\mathscr #1}
\newcommand{\widebar}[1]{\overline{#1}}



\newcommand{\R}{\field R}

\newcommand{\F}{\field F}
\newcommand{\Z}{\field Z}
\newcommand{\Q}{\field Q}

\newcommand{\simto}{\stackrel{\sim}{\to}}

\newcommand{\Spec}{\operatorname{Spec}}
\newcommand{\spec}{\operatorname{Spec}}
\newcommand{\Spf}{\operatorname{Spf}}

\renewcommand{\P}{\field P}

\DeclareMathOperator{\Pic}{Pic}

\DeclareMathOperator{\GL}{GL}


\renewcommand{\H}{\operatorname{H}}

\newcommand{\ch}{\operatorname{char}}

\renewcommand{\[}{[\!\hspace{0.03em}[}
\renewcommand{\]}{]\!\hspace{0.03em}]}

\DeclareMathOperator*{\tensor}{\otimes}

\DeclareMathOperator{\rk}{\operatorname{rk}}

\newcommand{\id}{\operatorname{id}}

\DeclareMathOperator{\Aut}{\operatorname{Aut}}

\DeclareMathOperator{\Hom}{\operatorname{Hom}}
\DeclareMathOperator{\Ext}{\operatorname{Ext}}

\DeclareMathOperator{\NS}{NS}

\newcommand{\eps}{\varepsilon}



\DeclareMathOperator{\Nef}{Nef}

\renewcommand{\sp}{\operatorname{sp}}

\DeclareMathOperator{\colim}{colim}

\newtheorem{lem}{Lemma}[section]

\DeclareMathOperator{\Def}{Def}

\newtheorem{thm}[lem]{Theorem}

\newtheorem{prop}[lem]{Proposition}

\newtheorem{cor}[lem]{Corollary}

\theoremstyle{definition}

\theoremstyle{remark}
\newtheorem{remark}[lem]{Remark}

\title{A note on the cone conjecture for K3 surfaces in positive characteristic}
\author{Max Lieblich}
\email{lieblich@uw.edu}
\author{Davesh Maulik}
\email{maulik@mit.edu}

\begin{document}
\maketitle
\begin{abstract}
We prove that, for a $K3$ surface in characteristic $p>2$, the automorphism group acts on the nef cone with a rational polyhedral fundamental domain and on the nodal classes with finitely many orbits.  As a consequence, for any non-negative integer $g$, there are only finitely many linear systems of irreducible curves on the surface of arithmetic genus $g$, up to the action of the automorphism group.
\end{abstract}
\tableofcontents

\section{Introduction}\label{S:intro}

Given any K3 surface over the complex numbers, it is a theorem of Sterk (based
on ideas of Looijenga) \cite{sterk} that the automorphism group acts on the nef cone of the
surface with a rational polyhedral fundamental domain. In this note, we extend
these results to characteristic $p>2$. In particular, any K3 surface in this
setting carries only finitely many elliptic pencils, up to the action of the
automorphism group, answering a question of A. Kumar to the second author.

We split the problem into three cases, according to the height and Picard number
of the surface. For K3 surfaces of finite height, results of Katsura and van der
Geer allow us to lift the surface along with its full Picard group to
characteristic $0$. We can then directly apply Sterk's theorem by a
specialization argument. For K3 surfaces with Picard number $22$, we will use
Ogus's supersingular Torelli theorem to reduce the statement to an analogue of
Sterk's original theorem. At this stage, thanks to recent proofs of the Tate
conjecture \cite{charles-tate, charles-tate-erratum, charles-tate-zarhin,
mp-tate, mpw-tate, maulik-tate}, we could be done. However, there is a simpler
deformation-theoretic argument for dealing with Artin-supersingular K3s of
Picard number less than $22$ that does not assume the Tate conjecture for
supersingular K3 surfaces. We include this argument here in Section
\ref{S:violation} in case it proves useful for other purposes.

These results are almost certainly known to experts in the field, but they
appear to have escaped appearing in the literature. In characteristic $0$,
Sterk's theorem is a special case of the cone conjecture of Morrison-Kawamata,
predicting such behavior for all smooth projective varieties with numerically
trivial canonical bundle. We refer the reader to \cite{totaro2} for a beautiful
survey of the history and known results. While the authors are optimistic that
several of Totaro's more general results \cite{totaro} should extend using the
methods here, higher-dimensional results remain mysterious even in
characteristic $0$.

A brief sketch of the contents of this note: In section \ref{S:special} we show
that for a family of K3s over a dvr, if the specialization map on Picard
groups is an isomorphism then it is equivariant with respect to the
specialization map on automorphism groups. In sections \ref{S:violation} and
\ref{S:lifting} we show how to create various families with constant Picard
group and improved generic fiber. In section \ref{S:Shioda} we prove the main
theorem for K3 surfaces with Picard number 22, and in section \ref{S:main} we
prove the result in general.

The authors would like to thank O.\ Benoist, H.\ Esnault, N.\ Fakhruddin, A.\ Kumar, C.\ 
Liedtke, G.\ Martin, C.\ McMullen, V.\
Srinivas, and the referee for helpful questions, comments, and corrections. The first author is
partially supported by NSF grant DMS-1021444, NSF CAREER grant DMS-1056129, and
the Sloan Foundation. The second author was partially supported by a Clay
Research Fellowship.

\section{Some results on specializations of automorphisms and Nef cones}\label{S:special}

Fix a Henselian dvr $R$ with algebraically closed residue field $k$ and fraction
field $K$. Fix an algebraic closure $\widebar K\supset K$. Given a finite
extension $K\subset L$, write $R_L$ for the integral closure of $R$ in $L$; this
is again a complete dvr.

\begin{thm}\label{specialize automorphisms} Let $\pi:X\to\spec R$ be a smooth
projective relative surface whose special fiber is not birationally ruled. Write

$$\sp:\Pic(X_{\widebar K})\to\Pic(X_k)$$ for the natural specialization map. If
$\sp(\ms L)$ is ample for every ample invertible sheaf $\ms L$ on
$X_{\widebar K}$ then there is a group
homomorphism $$\sigma:\Aut(X_{\widebar K})\to\Aut(X_k)$$ such that $\sp$ is
$\sigma$-equivariant with respect to the natural pullback actions. In addition,
if $\H^0(X, T_X)=0$ then $\sigma$ is injective.
\end{thm}
\begin{proof}
 Given a pair of field extensions $L'/L/K$, the faithful flatness of $L'/L$
implies that the pullback map $$\Aut(X_L)\to\Aut(X_{L'})$$ is
injective. On the other hand, given any algebraic field extension $L/K$, any
automorphism of $X_L$ is defined over some finite subextension. We thus see that for a fixed algebraic
closure $\widebar K/K$ the pullback maps define a canonical isomorphism
 \begin{equation}\label{Eq:colimit}
 \colim_{K\subset L\subset\widebar K}\Aut(X_L)\to\Aut(X_{\widebar
K}),\end{equation} with the colimit taken over all finite subextensions. For each
finite extension $L/K$, let $R_L$ denote the normalization of $R$ in $L$; since
$R$ is Henselian, $R_L$ is again a Henselian dvr with residue field $k$.
 
 Since $X_k$ is not birationally ruled, the Matsusaka-Mumford theorem (Corollary
1 of \cite{MM}) implies that for each $L$ the restriction map
 $$\Aut(X_{R_L})\to\Aut(X_L)$$
is an isomorphism. Restricting to $X_k$ thus yields a morphism
$$\Aut(X_L)\to\Aut(X_k).$$
When $\H^0(X_k,T_{X_k})=0$ this map is injective, as the (slightly more general)
Proposition \ref{P:injective spec} below shows.

For any extension $L'/L$, the diagram

\begin{equation}\label{Eq:aut diagram}
\xymatrix{\Aut(X_L)\ar[dd] & \Aut(X_{R_L})\ar[dr]\ar[dd]\ar[l] & \\
& & \Aut(X_k)\\
\Aut(X_{L'}) & \Aut(X_{R_L'})\ar[ur]\ar[l]}
\end{equation}
induced by base change commutes, yielding the desired group homomorphism . 
$$\sigma:\Aut(X_{\widebar K})\to\Aut(X_k).$$
Since the colimit in diagram \eqref{Eq:colimit}
has injective transition maps, we also see that $\sigma$
is injective when $\H^0(X_k, T_{X_k})=0$.

The diagram \eqref{Eq:aut diagram} acts in the natural way on the restriction
diagram
\begin{equation}\label{Eq:pic diagram}
\xymatrix{\Pic(X_L)\ar[dd] & \Pic(X_{R_L})\ar[dr]\ar[dd]\ar[l] & \\
& & \Pic(X_k)\\
\Pic(X_{L'}) & \Pic(X_{R_L'})\ar[ur]\ar[l]}
\end{equation}
of Picard groups.  Since the resulting map
$$\Pic(X_{\widebar K})\simto\colim\Pic(X_{L})\to\Pic(X_k)$$
defines the usual specialization map, we conclude that $\sp$ is
$\sigma$-equivariant, as claimed.
\end{proof}

\begin{prop}\label{P:injective spec}
Suppose $\H^0(X_k,T_{X_k})=0$. If $U\subset X$ is the complement of finitely many
$k$-valued points, then any
automorphism $f$ of $U$ that acts as the identity on $U_k$ equals the identity.
\end{prop}
\begin{proof}
 Consider the restriction of $f$ to the formal completion $\widehat U_{U_k}$.
Write $U^{(n)}$ for $U\tensor R/\mf m_R^{n+1}$. The lifts of $f|_{U^{(n)}}$ to
an automorphism of $U^{(n+1)}$ are a torsor under
$\H^0(U_k,T_{U_k})=\H^0(X_k,T_{X_k})=0$, which implies that $f|_{\widehat U}=\id$.
Writing $\xi\in U_k$ for the generic point, we conclude that the induced
automorphism $f_{\xi}$ of $\widehat{\ms O}_{U,\xi}$ equals the identity,
whereupon the restriction of $f$ to the generic point of $X_K$ fits into a
commutative diagram
 $$\xymatrix{K(X_K)\ar[r]^f\ar[d] & K(X_K)\ar[d]\\
 K(\widehat{\ms O}_{U,\xi})\ar[r]_f & K(\widehat{\ms O}_{U,\xi}),}$$ with each
vertical arrow the canonical inclusion into the completion. We conclude that
$f|_{K(U_K)}=\id$, whence $f=\id$ since $U$ is separated.
\end{proof}

In this paper, we will use Theorem~\ref{specialize automorphisms} for a
family of K3 surfaces $X\to\Spec R$. The special fiber is not birationally ruled,
since, for example, it is
simply connected and not rational. (Of course, such surfaces can be uniruled.)
When the specialization map for the Picard groups of a family of K3
surfaces is an isomorphism, there are a few consequences for the cone of curves
that we will also use in the sequel.

\begin{lem}\label{curves deform}
If $X\to\Spec R$ is a relative K3 surface such that the specialization map
$\sp$ on Picard groups is an isomorphism, then any integral curve
$C_k\subset X_k$ is the closed fiber of a flat family of closed subschemes
$C\subset X_{R_L}$ for some finite extension $L$.
\end{lem}
\begin{proof}
By the assumption on specialization, the invertible sheaf $\ms O_{X_k}(C)$ lifts
to an invertible sheaf $\ms L$ on some such $X_{R_L}$. Consider the cohomology
groups $\H^i(X_k,\ms L_k)$ for $i=1,2$. Since $\ms L_k$ is effective and
non-trivial, Serre duality and the fact that $X_k$ is K3 imply that
$\H^2(X_k,\ms L_k)=0$ and $\ms O_{C_k}(C_k)\cong\omega_{C_k}$. The sequence
$$0\to\ms O_{X_k}\to\ms O_{X_k}(C_k)\to\ms O_{C_k}(C_k)\to 0$$
gives rise to a sequence
$$0\to\H^1(X_k,\ms L_k)\to\H^1(C_k,\omega_{C_k})\to\H^2(X_k,\ms O_X)\to 0.$$
Grothendieck-Serre duality shows that each of the latter two vector spaces are
$1$-dimensional, whence we conclude that $\H^1(X_k,\ms L_k)=0$.

It follows from cohomology and base change that $(\pi_L)_\ast\ms L$ is a locally
free $R_L$-module whose sections functorially compute $\H^0(\ms L)$. In
particular, there is a section $s:\ms O_{X_L}\to\ms L$ lifting the section with
vanishing locus $C_k$. The vanishing locus of $s$ gives the desired closed
subscheme $C\subset X_L$.
\end{proof}

\begin{cor}\label{C:nef preserved}
If $X\to\Spec R$ is a relative K3 surface such that specialization map
$\sp$ is an isomorphism then $\sp$ 
induces an isomorphism of nef cones $\Nef(X_{\widebar K})\to\Nef(X_k)$
respecting the ample cones.
\end{cor}
\begin{proof}
 First observe that the specialization map preserves the effective
 cones. Indeed, by assumption the cones lie in canonically isomorphic
 spaces. Specialization inserts one cone into the other.
 By Lemma \ref{curves deform}, this inclusion is surjective. But the
 nef cone is determined by intersecting with elements of the 
effective cone, giving the result. Since this is an isomorphism that respects
the inner product and the ample cone is the interior of the nef cone, this
isomorphism respects the ample cones.
\end{proof}

\section{Deforming violations of Artin's conjecture to finite height}\label{S:violation}

This section is not needed for the argument if one accepts the full
Tate conjecture for K3 surfaces (which was proven after this was
originally written \cite{charles-tate,charles-tate-erratum,charles-tate-zarhin,mp-tate,mpw-tate,maulik-tate}). We include it
here because it is significantly simpler than the proof of the Tate
conjecture and the argument may be of independent interest.

Suppose $X$ is an Artin-supersingular surface. If the Picard rank of $X$ is
greater than 4 then $X$ is elliptic and therefore Shioda-supersingular by
Theorem 1.7 of \cite{artin}. (To see that any $X$ with Picard number at least
$5$ is elliptic, note that the Picard lattice represents $0$ by the
Hasse-Minkowski theorem and the Hodge index theorem, and once the lattice
represents $0$ it is a standard exercise that the surface contains an elliptic
curve -- e.g., exercise IX.6 of \cite{beauville}.) Thus, we will assume that $X$
has Picard rank $\rho \leq 4$.

\begin{prop}\label{P:finitize the height}
There is a smooth projective relative K3 surface $\mc X$ over $k\[t\]$ such
that
\begin{enumerate}
\item $\mc X_0$ is isomorphic to $X$
\item the restriction map $\Pic(\mc X)\to\Pic(X)$ is an isomorphism
\item the generic fiber $\mc X_{\eta}$ is of finite height
\end{enumerate}
\end{prop}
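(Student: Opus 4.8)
The plan is to construct the required deformation by understanding the local deformation space of $X$ together with the constraint of preserving the Picard group. Write $\Def(X)$ for the formal deformation space of $X$; since $X$ is a $K3$ surface, this is smooth of dimension $20$ over $k$. Inside it sits the locus $\Def(X, \Pic X)$ along which the entire Picard group lifts; since $\rho = \rho(X) \le 4$, this locus is (formally) smooth of dimension $20 - \rho \ge 16$. The key point is that the Artin-supersingular locus — the locus where the formal Brauer group has infinite height — is a proper closed subset of $\Def(X, \Pic X)$ of positive codimension, essentially because the height stratification of the versal family is cut out by the vanishing of successive coordinates of the Artin--Mazur formal group, and the supersingular stratum has codimension equal to (at most) $\rho_0 - 1$ where $\rho_0$ is the full supersingular Picard rank $22$; restricted to the $(20-\rho)$-dimensional family with constant Picard lattice, the supersingular locus still has positive codimension as long as $20 - \rho > 0$, which holds since $\rho \le 4$. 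Hence a general formal curve $\spec k\[t\] \to \Def(X, \Pic X)$ through the closed point $[X]$ meets the supersingular locus only at the origin, so its generic point parametrizes a $K3$ of finite height.

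**The steps**, in order: (1) Set up $\Def(X)$ and the sub-functor $\Def(X, L)$ for each $L \in \Pic(X)$, recalling that $\Def(X, L) \subset \Def(X)$ is a Cartier divisor (or the whole space if $L$ is trivial in the relevant sense), and intersecting over a set of generators of $\Pic(X)$ to get $\Def(X, \Pic X)$, which by the $\rho \le 4$ bound is formally smooth of dimension $\ge 16$. (2) Analyze the height stratification: by Artin's work on supersingular $K3$ surfaces and the Artin--Mazur formal group $\widehat{\Br}(X)$, the locus of infinite height inside $\Def(X)$ has codimension $\ge 9$ (it is cut out inside the formal group's moduli by vanishing of enough coefficients); crucially, one must check that this supersingular locus does not contain all of $\Def(X, \Pic X)$. (3) Argue that it cannot: if $\Def(X,\Pic X)$ were entirely supersingular, then (passing to an honest algebraic family via Artin approximation / effectivity of the formal deformation for polarized $K3$s) one would produce a positive-dimensional family of supersingular $K3$s with constant Picard lattice of rank $\le 4$; but supersingular $K3$ surfaces have Picard rank $22$, a contradiction. (4) Choose a general smooth formal curve through $[X]$ in $\Def(X, \Pic X)$ transverse to the supersingular locus; algebraize it using the fact that some ample class in $\Pic(X)$ lifts across the whole family, giving a projective relative $K3$ surface $\mc X \to \spec k\[t\]$ with $\mc X_0 \cong X$, with $\Pic(\mc X) \to \Pic(X)$ an isomorphism by construction of $\Def(X, \Pic X)$, and with $\mc X_\eta$ of finite height since the curve meets the supersingular locus only at $0$.

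**The main obstacle** is step (3): ruling out the possibility that the constant-Picard-lattice deformation space is entirely contained in the supersingular locus. One expects this to follow from the fact that supersingularity forces Picard rank $22$ — so a family with fixed small Picard lattice cannot be everywhere supersingular — but making this rigorous requires either a clean dimension count on the supersingular $K3$ moduli (Ogus's crystalline period space has dimension $9$, while $\Def(X, \Pic X)$ has dimension $\ge 16$, so the inclusion is impossible on dimension grounds alone) or a direct argument that the locus of infinite height in $\Def(X)$ has codimension exactly $9$ and meets $\Def(X,\Pic X)$ properly. A secondary technical point is the algebraization in step (4): one must ensure the chosen formal curve lies in the locus where a fixed ample line bundle deforms, so that Grothendieck's existence theorem applies and yields a genuine scheme $\mc X$ over $k\[t\]$ rather than merely a formal family; this is where keeping a polarization in $\Pic(X)$ throughout the construction is essential.
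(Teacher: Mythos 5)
Your overall architecture is the same as the paper's: cut out the locus $\Def(X,\Pic X)$ of dimension at least $16$ inside the $20$-dimensional deformation space using Deligne's result that each line bundle imposes a single equation, algebraize using an ample class (via Artin approximation / Grothendieck existence), and conclude by a dimension count showing the infinite-height locus cannot swallow this family. However, there is a genuine gap at exactly the step you flag as the main obstacle, and neither of the two arguments you offer for it is admissible here. Your contradiction argument ``a positive-dimensional family of supersingular $K3$s with constant Picard lattice of rank $\le 4$ is impossible because supersingular $K3$ surfaces have Picard rank $22$'' is precisely Artin's conjecture (Artin-supersingular $\Rightarrow$ Shioda-supersingular), whose possible failure is the very case this proposition is designed to handle --- indeed, if you allow yourself that implication, the surface $X$ of the proposition does not exist and there is nothing to prove, so the argument is circular/vacuous. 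Your dimension-count variant has the same problem in disguise: Ogus's $9$-dimensional crystalline period space parametrizes supersingular $K3$ \emph{crystals}, i.e.\ it controls Shioda-supersingular (rank $22$) surfaces, so bounding the locus of \emph{infinite height} in $\Def(X,\Pic X)$ by its dimension again presupposes that every Artin-supersingular surface lands in that period space. The input actually needed, and the one the paper uses, is a bound on the infinite-height stratum itself: after spreading out to a scheme $S$ of finite type unramified over the moduli functor of polarized $K3$s, Proposition 14 of \cite{ogus} (the height-strata preprint, not the crystalline Torelli paper) gives that the supersingular locus in $S$ has dimension $9$, which is strictly less than $16$, so the generic point of every component of $\Spec A$ has finite height; one then chooses a map $\spec k\[t\]\to\Spec A$ hitting the closed point and avoiding this locus generically.

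Two smaller corrections. First, your parenthetical justification ``the locus of infinite height \ldots is cut out by vanishing of enough coefficients, hence has codimension $\ge 9$'' has the inequality backwards: being cut out by $N$ equations (the coefficients $v_1,\ldots$ of the formal Brauer group) bounds the codimension \emph{above} by $N$; the nontrivial content is the lower bound on codimension (upper bound on dimension), which is exactly Ogus's theorem and cannot be waved through. Second, you assert $\Def(X,\Pic X)$ is formally smooth of dimension exactly $20-\rho$; for a surface of infinite height the map $\NS(X)\tensor k\to\H^1(X,\Omega^1_X)$ need not be injective, so smoothness is not available (this is why the paper's Lemma \ref{L:cokernel} is confined to the finite-height setting), and only the lower bound $\dim\ge 20-\rho\ge 16$ is justified --- fortunately that lower bound, on every irreducible component through the origin, is all the argument requires.
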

\begin{proof}
Fix generators $L_1,\ldots,L_\rho$ for $\Pic(X)$. By Proposition 1.5 of
\cite{deligne}, we know that the locus in $\Spf k\[t_1,\ldots,t_{20}\]$ over
which each $L_i$ deforms is determined by a single equation. We conclude that
there is a closed subscheme $\Spec A\subset\Spec k\[t_1,\ldots,t_{20}\]$ of
dimension at least 16 and a universal formal deformation of
$(X;L_1,\ldots,L_\rho)$ over $\Spf A$. Since some linear combination of the
$L_i$ is ample, this formal deformation is algebraizable. It suffices to show
that the generic fiber is not supersingular.

By Artin approximation, we can realize $A$ as the completion of a scheme $S$ of
finite type over $k$ at a closed point that carries a family of K3 surfaces
giving rise to our family by completion. Moreover, we may assume that $S$ is
unramified over the functor of polarized K3 surfaces. By Proposition $14$ in
\cite{ogus}, the locus in $S$ parametrizing supersingular K3 surfaces has
dimension $9$. Thus, a general point of $S$ will parametrize a surface of finite
height, and the same will be true of the family restricted to $\Spec A$, as
desired.
\end{proof}

\section{Lifting the Picard group to characteristic $0$}\label{S:lifting}

In this section we prove that any K3 surface $X$ of finite height over a
perfect field $k$ of characteristic $p$ admits a lift to
characteristic $0$ that 
lifts the entire Picard group. While we could use the Nygaard-Ogus
theory of quasi-canonical 
liftings \cite{nygaard-ogus}, this bare lifting fact is much
simpler to prove by analyzing the first-order deformations.

According to Proposition 10.3 of \cite{vandergeer}, if $X$ is a K3
surface of 
finite height over $k$ then the crystalline first Chern class yields an
injective linear map
$$\NS(X)\tensor_\Z k\to\H^1(X,\Omega^1).$$
(Note that the related map
$$\NS(X)\tensor\Z/p\Z\to\H_{\textrm{dR}}^2(X)$$
is always injective, regardless of the height of $X$, by results of Deligne and
Illusie (Remark 3.5 of \cite{deligne}).)

Serre duality yields an isomorphism $\H^1(X,\Omega^1)^\vee\simto\H^1(X,T_X)$.
Given invertible sheaves $L_1,\ldots,L_n$ on $X$, there is a deformation functor
$\Def(X;\{L_1,\ldots,L_n\})$ parametrizing deformations of $X$ together with
deformations of each $L_i$. Given a subset $J\subset\{1,\ldots,n\}$, there is an
associated forgetful natural transformation
$$\Def(X;\{L_i\}_{i=1}^n)\to\Def(X;\{L_j\}_{j\in J}).$$

\begin{prop}\label{finite height snc}
Suppose the isomorphism classes of the $L_j$ generate a subgroup of $\Pic(X)$ of
rank $n$. For each subset $J\subset\{1,\ldots,n\}$, the functor
$\Def(X;\{L_j\}_{j\in J})$ is prorepresentable by a family over
$W\[x_1,\ldots,x_{20-|J|}\]$. Moreover, there is a choice of coordinates for
each subset $J$ such that the forgetful morphism $$\Def(X;\{L_j\}_{j\in
J})\to\Def(X)$$ is identified with the closed immersion associated to the
quotient $$W\[x_1,\ldots,x_{20}\]\to W\[\{x_i\}_{i\not\in J}\].$$
\end{prop}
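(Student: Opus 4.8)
The plan is to exhibit each $\Def(X;\{L_j\}_{j\in J})$ as an explicit closed subfunctor of the formally smooth functor $\Def(X)$, cut out by equations whose linear terms are pinned down by the finite-height hypothesis, and then to choose coordinates on $\Def(X)$ adapted simultaneously to all the $J$. Recall first that, since $\H^2(X,T_X)\cong\H^0(X,\Omega^1_X)^\vee=0$, deformations of $X$ are unobstructed, so $\Def(X)$ is pro-represented by its universal deformation ring, which is a power series ring over $W:=W(k)$ in $\dim_k\H^1(X,T_X)=20$ variables; fix an identification with $R:=W\[x_1,\dots,x_{20}\]$. Its cotangent space is $\mathfrak{m}_R/\mathfrak{m}_R^2\cong k\cdot p\oplus\H^1(X,T_X)^{\vee}$, and Serre duality identifies $\H^1(X,T_X)^{\vee}$ with $\H^1(X,\Omega^1_X)$, so $\mathfrak{m}_R/(\mathfrak{m}_R^2+(p))\cong\H^1(X,\Omega^1_X)$. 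Because $\H^1(X,\mathcal{O}_X)=0$, a given invertible sheaf admits at most one lift to any deformation of $X$, so for each $i$ the forgetful map $\Def(X;L_i)\to\Def(X)$ is a closed immersion of functors; by Proposition 1.5 of \cite{deligne} it is cut out by a single equation $f_i\in\mathfrak{m}_R$. The standard obstruction calculus for invertible sheaves — the obstruction to lifting $L_i$ along a square-zero extension is cup product with $c_1(L_i)$ — identifies the image of $f_i$ in $\mathfrak{m}_R/(\mathfrak{m}_R^2+(p))\cong\H^1(X,\Omega^1_X)$ with the Hodge first Chern class of $L_i$.

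Since $\Def(X;\{L_j\}_{j\in J})(A)=\bigcap_{j\in J}\Def(X;L_j)(A)$ inside $\Def(X)(A)$ for every Artinian $A$, this functor is pro-represented by $R/(f_j:j\in J)$. The finite-height hypothesis now enters through \cite{vandergeer}: Proposition 10.3 there gives an injection $\NS(X)\otimes_{\Z}k\hookrightarrow\H^1(X,\Omega^1_X)$, and since the $L_j$ form a basis of the rank-$n$ subgroup of $\Pic(X)$ they generate — and remain linearly independent in $\NS(X)\otimes_{\Z}k$, as holds in particular when they extend to a $\Z$-basis of $\Pic(X)$, the case needed in the applications — the classes $c_1(L_1),\dots,c_1(L_n)$ are linearly independent in $\H^1(X,\Omega^1_X)$. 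Hence the images of $p,f_1,\dots,f_n$ in $\mathfrak{m}_R/\mathfrak{m}_R^2$ are linearly independent, so $\{p,f_1,\dots,f_n\}$ extends to a regular system of parameters $\{p,f_1,\dots,f_n,g_{n+1},\dots,g_{20}\}$ of the regular complete local ring $R$. By the Cohen structure theorem this system of parameters identifies $R$ with $W\[x_1,\dots,x_{20}\]$ via $x_j\mapsto f_j$ for $1\le j\le n$ and $x_i\mapsto g_i$ for $i>n$; fix this choice of coordinates on $\Def(X)$ once and for all. Then for every $J\subset\{1,\dots,n\}$ the subfunctor $\Def(X;\{L_j\}_{j\in J})$ is cut out inside $\Def(X)$ by the equations $x_j=0$ ($j\in J$), hence is pro-represented by $R/(x_j:j\in J)\cong W\[\{x_i\}_{i\notin J}\]$, a power series ring over $W$ in the $20-|J|$ variables $\{x_i\}_{i\notin J}$, carrying the restriction of the universal $K3$ surface together with the universal lifts of the $L_j$, $j\in J$; and the forgetful morphism $\Def(X;\{L_j\}_{j\in J})\to\Def(X)$ is exactly the closed immersion associated to the quotient $W\[x_1,\dots,x_{20}\]\surj W\[\{x_i\}_{i\notin J}\]$, as required.

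The only genuinely non-formal ingredients are the identification of the linear term of $f_i$ with $c_1(L_i)$ and the finite-height input of \cite{vandergeer} guaranteeing that these classes are linearly independent in $\H^1(X,\Omega^1_X)$; granting those, the remainder is the elementary fact that elements of a complete regular local ring with linearly independent images in the cotangent space can be completed to a regular system of parameters. The one point I would check with some care is that $\H^1(X,\mathcal{O}_X)=0$ indeed forces lifts of invertible sheaves to be unique, so that $\Def(X;L_i)$ is literally a subfunctor of $\Def(X)$ and the intersection description above is correct.
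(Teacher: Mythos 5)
Your proof is correct and follows essentially the same route as the paper's: Deligne's one-equation-per-line-bundle result, the identification of the linear term of each equation with the Chern/Atiyah class $d\log(L_i)\in\H^1(X,\Omega^1_X)$ via Illusie's obstruction calculus (the paper's Lemma \ref{L:cokernel}), and the van der Geer--Katsura injectivity at finite height, your regular-system-of-parameters step being just a more explicit packaging of the paper's Jacobian-criterion formal smoothness argument. Your caveat that the classes must remain independent in $\NS(X)\otimes k$ (not merely span a rank-$n$ subgroup) is well taken and matches the paper's own hypothesis in Lemma \ref{L:cokernel} that $\NS(X)/\Lambda$ have trivial $p$-torsion, which holds in the only case used (a $\Z$-basis of $\Pic(X)$), so your treatment is, if anything, slightly more careful on this point.
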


In particular, if we fix a $\Z$-basis of $\Pic(X)$ we see that the deformation functor $\Def(X;L_1,\ldots,L_n)$ is smooth over $W$.

\begin{cor}
Any K3 surface of finite height over a perfect field $k$ is the closed fiber
of a smooth projective relative K3 surface $\mc X\to\spec W$ in such a way
that the restriction map $\Pic(\mc X)\to\Pic(X)$ is an isomorphism.
\end{cor}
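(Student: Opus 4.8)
The plan is to deduce this from Proposition~\ref{finite height snc} together with a formal algebraization argument. Since $X$ is a $K3$ surface, $\Pic(X)=\NS(X)$ is a finitely generated free abelian group, of some rank $n$ (automatically $n\le 20$ when $X$ has finite height, by the injectivity statement recalled at the start of this section). Fix a $\Z$-basis $L_1,\ldots,L_n$ and take $J=\{1,\ldots,n\}$ in Proposition~\ref{finite height snc}; by the remark following that proposition, $\Def(X;L_1,\ldots,L_n)$ is prorepresentable by a formal scheme smooth over $W$, i.e.\ by $\Spf W\[x_1,\ldots,x_{20-n}\]$. Pulling back along the $W$-point $x_i\mapsto 0$ produces a formal deformation $\widehat{\mc X}\to\Spf W$ of $X$ equipped with invertible sheaves $\widehat L_i$ lifting each $L_i$.

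The next step is to algebraize $\widehat{\mc X}$. As $X$ is projective, some $\Z$-linear combination $\sum a_i L_i$ is ample, and $\sum a_i\widehat L_i$ is then a relatively (formally) ample invertible sheaf on $\widehat{\mc X}$. By Grothendieck's existence theorem, $\widehat{\mc X}$ is therefore the formal completion along the closed fiber of a projective scheme $\mc X\to\Spec W$; since smoothness and the $K3$ conditions are open and deformation-invariant, $\mc X$ is a smooth projective relative $K3$ surface with closed fiber $X$, and each $\widehat L_i$ is the completion of an invertible sheaf $\widetilde L_i$ on $\mc X$ restricting to $L_i$ on $X$.

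Finally I would check that $\sp\colon\Pic(\mc X)\to\Pic(X)$ is an isomorphism. Surjectivity is immediate, since the image already contains the basis $L_1,\ldots,L_n$. For injectivity, Grothendieck existence identifies $\Pic(\mc X)$ with $\Pic(\widehat{\mc X})=\varprojlim_n\Pic(\mc X_n)$, where $\mc X_n=\mc X\otimes_W W/\mf m^{n+1}$; the kernel of each transition map $\Pic(\mc X_{n+1})\to\Pic(\mc X_n)$ is a quotient of $\H^1(X,\ms O_X)=0$, so the natural map $\Pic(\widehat{\mc X})\to\Pic(\mc X_0)=\Pic(X)$ is injective, and hence so is $\sp$.

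The real content of the argument is Proposition~\ref{finite height snc}: the obstruction to deforming a single line bundle on a $K3$ surface lies in the nonzero group $\H^2(X,\ms O_X)$, so it is essential to know that the chosen basis lifts simultaneously. Everything after that is routine; the only point needing a little care is producing a relatively ample sheaf on the formal family in order to invoke Grothendieck existence, and this is handled above because the ample class is one of the classes we have arranged to lift.
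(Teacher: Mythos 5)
Your argument is essentially the paper's own proof: apply Proposition~\ref{finite height snc} to a $\Z$-basis of $\Pic(X)$, use formal smoothness over $W$ to get a formal lift carrying all the $L_i$, and algebraize via the lifted ample class and the Grothendieck Existence Theorem. Your extra verification that the restriction map on Picard groups is bijective (surjectivity from the lifted basis, injectivity from $\H^1(X,\ms O_X)=0$ and formal GAGA) is correct and in fact fills in a step the paper leaves implicit.
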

\begin{proof}
Given a basis $L_1,\ldots,L_n$ for $\Pic(X)$, we have by Proposition \ref{finite
height snc} that the deformation functor $\Def(X;L_1,\ldots,L_n)$ is formally
smooth over $W$ of relative dimension $20-n$. Since $W$ is Henselian, the
$k$-valued point $(X;L_1,\ldots,L_n)$ extends to a $W$-valued point, giving a
formal lifting. Since some linear combination of the $L_i$ is ample, we see that
the formal family is formally projective. By the Grothendieck Existence Theorem
this lift is therefore algebraizable as a projective scheme, as desired.
\end{proof}

The proof of Proposition \ref{finite height snc} is an immediate consequence of the following lemma.

\begin{lem}\label{L:cokernel}
Suppose $L_1,\ldots,L_j$ generate a subgroup $\Lambda\subset\NS(X)$ such that
the quotient $\NS(X)/\Lambda$ has trivial $p$-torsion. The cokernel of the
induced morphism $$d\log|_{\Lambda}:\Lambda\tensor k\to\H^1(X,\Omega^1_X)$$ is
naturally identified with the dual of $\Def(X;L_1,\ldots,L_j)(k[\eps])$.
\end{lem}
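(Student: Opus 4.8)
The plan is to present $\Def(X;L_1,\dots,L_j)(k[\eps])$ as a subspace of $\H^1(X,T_X)$ carved out by the $d\log$-classes of the $L_i$, and then dualize it via Serre duality on the K3 surface $X$. First I recall that $\Def(X)(k[\eps])=\H^1(X,T_X)$, and that the forgetful transformation $\Def(X;L_1,\dots,L_j)\to\Def(X)$ sends a first-order deformation together with its line-bundle data to the underlying deformation of the surface. For a first-order deformation $\mc X_\eps$ of $X$ classified by $v\in\H^1(X,T_X)$, the deformations of a line bundle $L$ along $\mc X_\eps$ form, when nonempty, a torsor under $\H^1(X,\ms O_X)$; since $X$ is K3, this group vanishes, so each $L_i$ deforms in at most one way along $\mc X_\eps$. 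Hence the induced map
$$\Def(X;L_1,\dots,L_j)(k[\eps])\longrightarrow\H^1(X,T_X)$$
is injective, with image the set of those $v$ along which every $L_i$ deforms.

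To identify this image, recall that the obstruction to deforming $L$ along $v$ is the image of $v$ under the connecting map $\H^1(X,T_X)\to\H^2(X,\ms O_X)$ of the Atiyah extension $0\to\ms O_X\to\mc D_L\to T_X\to 0$, i.e.\ cup product with the Atiyah class of $L$ in $\Ext^1(T_X,\ms O_X)\cong\H^1(X,\Omega^1_X)$. The essential input is that this Atiyah class is the first Chern class $d\log(L)$, which holds in every characteristic since the Atiyah-class map $\H^1(X,\ms O_X^{\ast})\to\H^1(X,\Omega^1_X)$ is the one induced by $d\log\colon\ms O_X^{\ast}\to\Omega^1_X$. Writing $\langle\,,\,\rangle\colon\H^1(X,T_X)\times\H^1(X,\Omega^1_X)\to\H^2(X,\ms O_X)$ for the pairing coming from $T_X\tensor\Omega^1_X\to\ms O_X$, we obtain
$$\Def(X;L_1,\dots,L_j)(k[\eps])=\bigl\{\,v\in\H^1(X,T_X)\ :\ \langle v,\,d\log(L_i)\rangle=0\ \text{for }i=1,\dots,j\,\bigr\}.$$
(Equivalently this space is $\mathbb H^1$ of the simultaneous Atiyah complex of the $L_i$; the fibre-product description on $k[\eps]$-points again comes from the uniqueness of lifts above.)

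It remains to run the linear algebra. Since $X$ is K3 we have $\omega_X\cong\ms O_X$ and $\H^2(X,\ms O_X)$ is $1$-dimensional, so $\langle\,,\,\rangle$ is the perfect Serre-duality pairing recorded above, identifying $\H^1(X,\Omega^1_X)$ with $\H^1(X,T_X)^\vee$. Under this identification the subspace just described is the annihilator $W^\perp$ of $W:=\operatorname{im}(d\log|_\Lambda)=\sum_{i} k\cdot d\log(L_i)$, and hence
$$\Def(X;L_1,\dots,L_j)(k[\eps])^\vee=(W^\perp)^\vee\cong\H^1(X,\Omega^1_X)/W=\coker(d\log|_\Lambda),$$
with all maps canonical. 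The hypothesis that $\NS(X)/\Lambda$ has trivial $p$-torsion ensures $\Lambda\tensor k\hookrightarrow\NS(X)\tensor k$, which together with the injectivity of $\NS(X)\tensor k\to\H^1(X,\Omega^1_X)$ for finite-height $X$ (Proposition 10.3 of \cite{vandergeer}) makes $d\log|_\Lambda$ itself injective; this is what pins the relative dimension $20-|J|$ in Proposition \ref{finite height snc}, though it is not needed for the cokernel identification as such.

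The main obstacle is the middle step: handling the deformation theory of the pair $(X,L_i)$ correctly in characteristic $p$, namely identifying the obstruction class with the de Rham class $d\log(L_i)$ (with the right normalization) so that the annihilator relation holds on the nose. Granting that compatibility, the remaining ingredients --- the vanishing $\H^1(X,\ms O_X)=0$ and the self-duality of $\H^1(X,T_X)$ for a K3 surface --- are purely formal.
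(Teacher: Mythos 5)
Your argument is correct and is essentially the paper's own proof: both identify the tangent space of $\Def(X;L_1,\ldots,L_j)$ inside $\H^1(X,T_X)$ as the annihilator of the $d\log$-classes, using Illusie's fact that the obstruction to deforming $L_i$ is the cup product with its Atiyah class, the identification of that Atiyah class with $d\log(L_i)$, and the perfect Serre-duality pairing $\H^1(X,\Omega^1_X)\times\H^1(X,T_X)\to\H^2(X,\ms O_X)$ on a K3, then dualize. Your added verification that the forgetful map is injective on tangent spaces (via $\H^1(X,\ms O_X)=0$) is a point the paper leaves implicit, but the route is the same.
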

\begin{proof}
Illusie proved (Proposition IV.3.1.8 of \cite{illusie}) that cupping the Atiyah
class of an invertible sheaf $L$ with the class of a first-order deformation
$[X']\in\H^1(X,T_X)=\Ext^1_X(\Omega^1_X,\ms O_X)$ gives the obstruction class in
$\H^2(X,\ms O)$ to deforming $L$ to $X'$. By Serre duality, the cup product
pairing $\H^1(X,\Omega^1_X)\times\H^1(X,T_X)\to\H^2(X,\ms O)$ is perfect. Since
$d\log$ computes the Atiyah class of $L$, we see that the tangent space
$\Def(X;L_1,\ldots,L_j)(k[\eps])$ is given by the subspace of
$\Hom(\H^1(X,\Omega^1),\H^2(X,\ms O))$
that annihilates $d\log(\Lambda\tensor k)$, as desired.
\end{proof}

\begin{proof}[Proof of Proposition \ref{finite height snc}]
By Deligne \cite{deligne}, each $L_i$ imposes one equation $f_i$ on
$R:=W\[t_1,\ldots,t_{20}\]$. We wish to show that $R/(f_1,\ldots,f_n)$ is
$W$-flat. Consider the Jacobian ideal $J:=(\partial f_i/\partial x_j)$. Reducing
modulo $p$ we see from Lemma \ref{L:cokernel} that $\widebar J\subset R/pR$ is
the unit ideal, whence $J$ is itself the unit ideal. It thus follows that
$R/(f_1,\ldots,f_n)$ is of finite type and formally smooth over $W$, whence it
is smooth, as desired.
\end{proof}

\section{Shioda-supersingular K3s}
\label{S:Shioda}

In this section we assume that $\ch k>2$, so that we can use Ogus's
crystalline theory of supersingular K3 surfaces \cite{ogus-shaf}.  
Fix a K3 surface over $k$ with Picard number $22$. The intersection pairing
makes the Picard group a lattice. Write $N$ for this rank $22$ lattice. Inside
$N\tensor\R$ is the nef cone $\Nef(X)$. It is the closure of the cone generated
by the ample classes in $N$, and thus there is an arithmetic group $O^+$
parametrizing isometries of $N$ whose extension to $\R$ preserves the positive 
cone.

Since $\rk N=22$ and $\dim_k \H^1(X,\Omega^1)=20$, the map $N\tensor
k\to\H_{\text{\rm dR}}^2(X/k)$ has a non-trivial kernel. Ogus has found a canonical
semilinear identification of this kernel with a subspace $K$ of $N\tensor k$. He
proved the following, among other things.

\begin{thm}[Ogus \cite{ogus-shaf}]
An automorphism of $X$ is uniquely determined by its image in $O^+$. Moreover,
the image of $\Aut(X)\to O^+$ is precisely the subgroup of elements that
preserve $K\subset N\tensor k$.
\end{thm}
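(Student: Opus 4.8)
This is Ogus's theorem; the plan is to deduce it from two of his deeper inputs: the classification of supersingular $K3$ crystals in terms of characteristic subspaces (\cite{ogus}), and the crystalline Torelli theorem (\cite{ogus-shaf}), which asserts that any isometry $\H^2_{\mathrm{cris}}(X/W)\risom\H^2_{\mathrm{cris}}(X'/W)$ of $K3$ crystals carrying the ample cone of $X$ onto that of $X'$ is induced by a unique isomorphism $X'\to X$. One uses $\ch k>2$ throughout, both for these results and for the $p$-elementary structure of $N$.

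First I would set up the crystalline picture. Write $H=\H^2_{\mathrm{cris}}(X/W)$ with its semilinear Frobenius $\Phi$; for a supersingular surface the crystalline cycle class identifies $N\tensor\Z_p$ with the Tate module $\{x\in H:\Phi x=px\}$, so in particular $N\tensor\Q_p$ spans the isocrystal $H[1/p]$. Any $f\in\Aut(X)$ acts on $H$ by a $\Phi$-equivariant $K3$-crystal isometry $f^\ast$; since it commutes with $\Phi$ it preserves $N\tensor\Z_p$, and since $N\tensor\Q_p$ spans $H[1/p]$ and $f^\ast$ is $W$-linear, $f^\ast$ is determined by $f^\ast|_N$. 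Granting this, if $f$ lies in the kernel of $\Aut(X)\to O^+$ then $f^\ast|_N=\id$, hence $f^\ast=\id$, and the uniqueness clause of the crystalline Torelli theorem forces $f=\id$; that the homomorphism lands in $O^+$ at all is immediate, since automorphisms preserve the ample cone and hence $\Nef(X)$. This gives the first assertion.

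Next I would identify the image. One containment is pure functoriality: for $f\in\Aut(X)$ the $k$-linear map $f^\ast$ on $N\tensor k$ is compatible with the Chern class map $N\tensor k\to\H^1(X,\Omega^1)$, hence preserves its kernel, hence preserves $K$ under Ogus's semilinear identification. For the reverse containment, take $g\in O^+$ with $g(K)=K$; using Ogus's classification, $(H,\Phi)$ is reconstructed functorially from the pair $(N,K)$, and the isometries of that pair are exactly the $K3$-crystal automorphisms of $H$, so $g$ lifts to a unique crystal automorphism $\widetilde g$ restricting to $g\tensor\Z_p$ on $N\tensor\Z_p$. Since $g\in O^+$ preserves $\Nef(X)$ it preserves the ample cone, so $\widetilde g$ carries the ample cone of $X$ onto itself, and the crystalline Torelli theorem supplies $f\in\Aut(X)$ with $f^\ast=\widetilde g$, necessarily mapping to $g$ in $O^+$. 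Hence the image is exactly the stabilizer of $K$.

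The hard part is entirely in the two cited theorems; within the argument itself the only delicate point is reconciling the two descriptions of the characteristic subspace — one has to know that $\ker(N\tensor k\to\H^1(X,\Omega^1))$, twisted appropriately by Frobenius, is the strictly characteristic subspace appearing in Ogus's crystal classification, so that an integral isometry of $N$ preserving it automatically lifts to a $\Phi$-equivariant automorphism of $H$. This compatibility, together with the reconstruction of $(H,\Phi)$ from $(N,K)$, is precisely the content of the supersingular $K3$-crystal formalism.
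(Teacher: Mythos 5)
The paper offers no proof of this statement at all---it is quoted directly from Ogus's crystalline Torelli paper \cite{ogus-shaf}---and your plan correctly reconstructs exactly the route that citation stands for: injectivity via the fact that $N\tensor\Q_p$ spans the isocrystal plus the uniqueness clause of the crystalline Torelli theorem, and surjectivity onto the stabilizer of $K$ via the reconstruction of the supersingular $K3$ crystal from the pair $(N,K)$ followed by the Torelli theorem applied to an ample-cone-preserving crystal isometry. So this is essentially the same approach as the paper, with the hard content deferred, as in the paper, to Ogus's results.
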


Let $W$ be the subgroup of $O^+$ generated by reflections in $(-2)$-curves. In
\cite{ogus-shaf}, Ogus proved for supersingular K3s the following analogue of
a well-known classical result.

\begin{prop}\label{P:ogus gp}
The group $W$ is a normal subgroup of $O^+$ that acts on $\Aut(X)$, and the
semidirect product $Aut(X)\ltimes W$ has finite index in $O^+$. Moreover, the
nef cone $\Nef(X)$ is a fundamental domain for the action of $W$ on the positive
cone $\ms C$.
\end{prop}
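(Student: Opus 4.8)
The plan is to carry the classical argument of Piatetski-Shapiro and Shafarevich for the ample cone of a $K3$ surface over to the crystalline setting. Beyond lattice geometry its only ingredients are Riemann--Roch and Serre duality on a surface, adjunction, the Nakai--Moishezon ampleness criterion, and the global Torelli theorem, and all of these survive in characteristic $p>2$, the last in the form of the crystalline Torelli theorem of Ogus quoted above. Fix the positive cone $\ms C\subset N\tensor\R$, the connected component of $\{x:x^2>0\}$ meeting the ample classes. For $\delta\in N$ with $\delta^2=-2$ the reflection $s_\delta(x)=x+(x\cdot\delta)\,\delta$ is an isometry fixing a point of $\ms C$, hence lies in $O^+$; since $N$ has signature $(1,21)$, the hyperplanes $\delta^\perp$ form an arrangement that is locally finite in the interior of $\ms C$, whose (open) chambers are the connected components of $\ms C\setminus\bigcup_\delta\delta^\perp$.

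The first step is to identify $\Nef(X)$ with the closure of one chamber. Riemann--Roch and Serre duality show that every $\delta$ with $\delta^2=-2$ is effective or anti-effective, so no such $\delta^\perp$ meets the ample cone; adjunction gives $C^2\ge-2$ for every irreducible curve $C\subset X$, and an irreducible curve of non-negative self-intersection lies in $\overline{\ms C}$, so that by Nakai--Moishezon and the Hodge index theorem
$$\Nef(X)=\bigl\{\,x\in\overline{\ms C}\ :\ x\cdot C\ge 0\ \text{for all }(-2)\text{-curves }C\,\bigr\},$$
with $\operatorname{Amp}(X)$ the open chamber $D_0$. Moreover, for $h$ ample the class $2h+(h\cdot C)\,C$ lies in $\Nef(X)$ and is orthogonal to $C$, so each $(-2)$-curve $C$ is orthogonal to an honest facet of $\Nef(X)$ and the walls of $D_0$ are exactly the hyperplanes $C^\perp$. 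The standard combinatorics of reflection arrangements then applies: the group generated by the reflections in the walls of $D_0$ --- which is precisely $W$ --- acts simply transitively on the set of chambers, each chamber is a $W$-translate of $D_0$, and $\overline{D_0}=\Nef(X)$ is a fundamental domain for $W$ on $\ms C$. As a by-product, each $\delta^\perp$ is a wall of some chamber $w(D_0)$, whence $\delta=\pm w(C)$ for a $(-2)$-curve $C$ and $s_\delta=w s_C w^{-1}\in W$; so $W$ agrees with the full group generated by the $s_\delta$ with $\delta^2=-2$.

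The group theory is then formal. Any $g\in O^+$ permutes the $(-2)$-vectors with $g s_\delta g^{-1}=s_{g\delta}$, so it normalizes $W$ and $W\trianglelefteq O^+$. Given $g\in O^+$, the chamber $g(D_0)$ is $w^{-1}(D_0)$ for some $w\in W$, so $wg$ stabilizes $D_0$ and hence $\Nef(X)$; thus $O^+=W\cdot\operatorname{Stab}_{O^+}(\Nef(X))$, and simple transitivity gives $W\cap\operatorname{Stab}_{O^+}(\Nef(X))=\{1\}$, so $O^+=W\rtimes\operatorname{Stab}_{O^+}(\Nef(X))$. Since automorphisms preserve the ample cone, the image of $\Aut(X)$ --- injective in $O^+$ by Ogus's theorem --- lies in $\operatorname{Stab}_{O^+}(\Nef(X))$ and meets $W$ trivially, so $\langle\Aut(X),W\rangle$ is the internal semidirect product $\Aut(X)\ltimes W$ with $W$ normal, on which $\Aut(X)$ acts by conjugation. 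Finally Ogus's theorem identifies the image of $\Aut(X)$ with $\{g\in O^+:gK=K\}$, which hence coincides with $\operatorname{Stab}_{O^+}(\Nef(X))\cap\operatorname{Stab}_{O^+}(K)$, so
$$\bigl[\operatorname{Stab}_{O^+}(\Nef(X)):\Aut(X)\bigr]\ \le\ \bigl[O^+:\operatorname{Stab}_{O^+}(K)\bigr]\ =\ \#\bigl(O^+\!\cdot K\bigr),$$
and the last orbit is finite because $O^+$ acts on $K$ only through its image in the finite orthogonal group of the discriminant form $N^\vee/N$. Since this index equals $[O^+:\Aut(X)\ltimes W]$, this would complete the proof.

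I do not expect a serious obstacle: everything other than the appeal to Ogus's crystalline Torelli theorem (and his classification of characteristic subspaces, for the finite-index claim) is the classical chamber argument, which is insensitive to the characteristic. The two points deserving care are the verification that $\operatorname{Amp}(X)$ is genuinely a single chamber rather than a union of chambers --- the content of the effective/anti-effective dichotomy for $(-2)$-classes together with the facet computation --- and the finiteness of the $O^+$-orbit of $K$, which is the one place the positive-characteristic input really enters.
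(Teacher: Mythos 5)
Your proof is correct in substance and rests on the same two pillars as the paper, but it distributes the work differently. For the assertions that $W$ is normal in $O^+$, acts simply transitively on the chambers, and has $\Nef(X)$ as a fundamental domain on $\ms C$, the paper simply cites the chain of results culminating in Proposition 1.10 of \cite{ogus-shaf}; you instead rerun the classical Piatetski-Shapiro--Shafarevich chamber argument (Riemann--Roch and Serre duality for the $\pm$effectivity of $(-2)$-classes, adjunction, Nakai--Moishezon, wall combinatorics), which indeed goes through verbatim in characteristic $p$ and is essentially the content of the results being cited, so your route is more self-contained but proves nothing the citation does not already supply. For the finite-index claim the two arguments have the same shape --- the image of $\Aut(X)$ is $\operatorname{Stab}_{O^+}(\Nef(X))\cap\operatorname{Stab}_{O^+}(K)$, and preserving $K$ cuts out a finite-index subgroup of $O^+$ because the action on $K$ factors through a finite group --- but the paper takes that finite group to be $\GL(N\otimes\F_p)$ via reduction modulo $p$, which needs nothing beyond $K\subset N\otimes k$, whereas your appeal to the orthogonal group of the discriminant form $N^\vee/N$ requires Ogus's finer statement that $K$ lies inside $(pN^\vee/pN)\otimes k$ (which you flag), so the paper's choice is the cheaper one. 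One wording slip, not a gap: the clause ``Ogus's theorem identifies the image of $\Aut(X)$ with $\{g\in O^+\colon gK=K\}$'' is accurate only under the paper's convention that $O^+$ already preserves $\Nef(X)$; with your convention that $O^+$ preserves the positive cone, that set contains all of $W$ (a reflection in a $(-2)$-vector of $N$ acts trivially on $N^\vee/N$, hence trivially on $(pN^\vee/pN)\otimes k\supseteq K$), and it is your next clause --- image $=\operatorname{Stab}_{O^+}(\Nef(X))\cap\operatorname{Stab}_{O^+}(K)$ --- that is the correct statement and the one your argument actually uses.
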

\begin{proof}
That $W$ is normal in $O^+$ and acts on $\ms C$ with fundamental domain
$\Nef(X)$ is in the series of results proving Proposition 1.10 of
\cite{ogus-shaf}. To see the rest of the Proposition, note that reduction modulo
$p$ defines a map to a finite group $O^+\to\GL(N\tensor\F_p)$. The condition
that an element of the latter group preserve $K$ defines an a priori finite
subgroup, showing that $\Aut(X)$ has finite index in $O^+/W$.
\end{proof}

The following is an immediate consequence of Proposition \ref{P:ogus gp}
\begin{cor}
There is a rational polyhedral fundamental domain for the action of $\Aut(X)$ on
$\Nef(X)$ if and only if there is a rational polyhedral fundamental domain for
the action of $O^+$ on $\ms C$ that lies entirely in $\Nef(X)$.
\end{cor}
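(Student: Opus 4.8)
The plan is to rewrite both conditions in terms of the finite‑index subgroup $G_0:=\Aut(X)\ltimes W$ of $O^+$. I will use three facts supplied by Proposition~\ref{P:ogus gp} --- that $W$ is normal in $O^+$, that $\Nef(X)$ is a fundamental domain for $W$ acting on $\ms C$, and that $[O^+:G_0]<\infty$ --- together with the elementary observation that $\Aut(X)\subseteq O^+$ stabilizes $\Nef(X)$, since an automorphism preserves the ample cone. Recall also that the $W$‑translates $\{w\,\Nef(X)\}_{w\in W}$ are the chambers of the arrangement of hyperplanes $\delta^{\perp}$ ($\delta\in N$, $\delta^2=-2$) inside $\ms C$, and that $W$ permutes them simply transitively; in particular an element of $W$ fixing $\Nef(X)$ is the identity, so $\Aut(X)\cap W=\{1\}$ and $G_0/W\cong\Aut(X)$.

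The first step is to verify that a cone $D\subseteq\Nef(X)$ is a rational polyhedral fundamental domain for $\Aut(X)$ acting on $\Nef(X)$ if and only if it is one for $G_0$ acting on $\ms C$. For the covering property, write $G_0=W\cdot\Aut(X)$ as a subset of $O^+$ (using that $\Aut(X)$ normalizes $W$), so that $G_0\cdot D=W\cdot(\Aut(X)\cdot D)$; since $\ms C=W\cdot\Nef(X)$ and distinct $W$‑chambers have disjoint interiors, this equals $\ms C$ exactly when $\Aut(X)\cdot D=\Nef(X)$. For interior‑disjointness, every $g\in G_0$ can be written $g=wa$ with $w\in W$, $a\in\Aut(X)$, and then $g\,\Nef(X)=w\bigl(a\,\Nef(X)\bigr)=w\,\Nef(X)$ is a single $W$‑chamber; hence, for $g\neq1$, if $\operatorname{relint}(gD)$ met $\operatorname{relint}(D)\subseteq\operatorname{relint}(\Nef(X))$ we would get $w\,\Nef(X)=\Nef(X)$, forcing $w=1$ and $g=a\in\Aut(X)\setminus\{1\}$, so that $\operatorname{relint}(aD)$ meets $\operatorname{relint}(D)$ inside $\Nef(X)$, contrary to the hypothesis on $D$; the reverse implication is similar. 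Rationality and polyhedrality are untouched, so this is a bijection between the two sets of fundamental domains, and in particular the left‑hand side of the corollary is equivalent to the existence of a rational polyhedral fundamental domain for $G_0$ on $\ms C$ contained in $\Nef(X)$.

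It remains to pass between $G_0$ and $O^+$ while keeping the fundamental domain inside $\Nef(X)$, and this is the one delicate point --- the step where I expect the real work to lie. I would invoke the standard principle (Looijenga's lemma; see the survey \cite{totaro2}) that a lattice‑preserving group acting on a cone admits a rational polyhedral fundamental domain precisely when some rational polyhedral subcone has orbit equal to the cone, and that the fundamental domain it produces may be chosen inside that subcone. If $D\subseteq\Nef(X)$ is a rational polyhedral fundamental domain for $G_0$ on $\ms C$, then a fortiori $O^+\cdot D=\ms C$, so $O^+$ acquires a rational polyhedral fundamental domain inside $D\subseteq\Nef(X)$. Conversely, let $\Pi\subseteq\Nef(X)$ be a rational polyhedral fundamental domain for $O^+$ on $\ms C$, and choose representatives $h_1,\dots,h_m$ for the right cosets $G_0\backslash O^+$. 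Each $h_i$ is an isometry of $N$, hence permutes the hyperplanes $\delta^{\perp}$ ($\delta^2=-2$) and so carries the chamber $\Nef(X)$ onto a chamber $w_i\,\Nef(X)$; replacing $h_i$ by $w_i^{-1}h_i$, which lies in the same coset since $w_i\in W\subseteq G_0$, we may assume $h_i\Pi\subseteq\Nef(X)$ for all $i$. Then $\bigcup_i h_i\Pi\subseteq\Nef(X)$, its $G_0$‑orbit is $\bigcup_i G_0 h_i\Pi=O^+\cdot\Pi=\ms C$, and the same holds for its convex hull, which is a rational polyhedral cone contained in the convex cone $\Nef(X)$; Looijenga's lemma then yields a rational polyhedral fundamental domain for $G_0$ on $\ms C$ inside $\Nef(X)$. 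Combined with the first step, this gives both implications of the corollary.
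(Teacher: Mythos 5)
Your overall architecture — transfer along $W$ using that $\Nef(X)$ is a fundamental domain for $W$ on $\ms C$, then pass between $G_0=\Aut(X)\ltimes W$ and $O^+$ by finite index — is exactly the intended one (the paper treats the corollary as immediate from Proposition~\ref{P:ogus gp}), and your Step 1 is essentially right; its covering half is cleaner if you use that $\Nef(X)$ is a \emph{strict} fundamental domain for $W$ (if $x\in\Nef(X)$ and $x=w\,y$ with $y\in\Aut(X)\cdot D\subseteq\Nef(X)$, then $x,y$ lie in the same $W$-orbit and both in $\Nef(X)$, so $x=y$); your interior-point argument only covers a dense part of $\Nef(X)$, and an infinite union of closed cones need not be closed. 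The genuine gap is the clause you attach to ``Looijenga's lemma'': that the rational polyhedral fundamental domain it produces ``may be chosen inside that subcone.'' That refinement is not part of the statement in Looijenga, Sterk, or Totaro's survey — the criterion asserts only the existence of \emph{some} rational polyhedral fundamental domain for the action on the rational hull, with no control on its location — and it is not obviously true: the standard construction $\{x:\langle\xi,x\rangle\le\langle\xi,\gamma x\rangle\ \text{for all }\gamma\}$ depends on an auxiliary $\xi$ and has no reason to land in a prescribed covering cone, while cutting an existing domain into pieces lying in $\Pi$ yields a finite union of polyhedral cones, not a polyhedral fundamental domain. Since you invoke this refinement in both directions, the proof has a hole at precisely the point you flag as ``the one delicate point.''

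Both directions can be repaired, but not by citation alone. For the direction the paper actually uses (an $O^+$-domain $\Pi\subseteq\Nef(X)$ gives an $\Aut(X)$-domain on $\Nef(X)$): your cone $Q=\operatorname{conv}\bigl(\bigcup_i h_i\Pi\bigr)\subseteq\Nef(X)$ already satisfies $\Aut(X)\cdot Q\supseteq\Nef(X)$ — for $x\in\Nef(X)$ write $x=w\,a\,q$ with $aq\in\Nef(X)$ and use strictness of the $W$-domain to get $x=aq$ — so you can apply Looijenga's criterion in its standard form directly to $\Aut(X)$ acting on $\Nef(X)_+$ (it applies to an arbitrary convex cone with a lattice-preserving action, which is exactly how Sterk and Totaro use it), and the resulting fundamental domain lies in $\Nef(X)$ automatically; no refinement, and no transfer back through Step 1, is needed. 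For the converse direction you cannot simply say ``a fortiori $O^+\cdot D=\ms C$, so $O^+$ acquires a domain inside $D$'': what actually produces an $O^+$-domain inside $\Nef(X)$ is Sterk's explicit construction $\{x\in\Nef(X)_+ : H\cdot(\phi(x)-x)\ge 0\ \text{for all }\phi\in O^+\}$ (the Proposition quoted immediately after the corollary), which lies in $\Nef(X)$ by construction and is rational polyhedral by Sterk's Lemmas 2.3--2.4; either invoke that, or supply an actual proof of your strengthened form of Looijenga's lemma.
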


These reductions place us in classical territory: the positive cone $\ms C$ is a
standard round cone of the form $x_1>\sqrt{x_2^2+\cdots+x_{22}^2}$. This is one
of the homogeneous self-adjoint convex cones in the sense of \cite{ash-mumford-rap-tai}.

In fact, Sterk shows slightly more than the bare classical statement about the
existence of a rational polyhedral fundamental domain. His proofs carry over to
the present situation verbatim. Write $\Nef(X)_+$ for the convex hull of
$\Nef(X)\cap(N\tensor\Q)$ in $N\tensor\R$.

\begin{prop}[Sterk, Lemma 2.3, Lemma 2.4, Proposition 2.5 of \cite{sterk}] Fix
an ample divisor $H$. The locus of $x\in\Nef(X)_+$ such that $H\cdot
(\phi(x)-x)\geq 0$ for all $\phi\in O^+$ is a rational polyhedral fundamental
domain for the action of $O^+$ on $\ms C$. Moreover, there are finitely many
orbits for the action of $O^+$ on the nodal classes of $X$.
\end{prop}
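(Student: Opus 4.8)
The plan is to follow Sterk's argument: via Ogus's Torelli theorem the statement has been reduced to one about the arithmetic group $O^+$ acting on the round cone $\ms C\subset N\tensor\R$, and in this lattice-theoretic form Sterk's proofs apply with no change. Write $\mathbb{H}$ for the hyperbolic $21$-space $(\ms C\setminus\{0\})/\R_{>0}$, on which $O^+$ is an arithmetic lattice, and let $D=\{x\in\Nef(X)_+ : H\cdot(\phi(x)-x)\ge 0\text{ for all }\phi\in O^+\}$ be the locus in question. Since $H\cdot x\le H\cdot\phi(x)$ is equivalent to $d([H],[x])\le d([\phi^{-1}H],[x])$, the cone $D$ projectivizes, once the rational isotropic rays are adjoined, to the Dirichlet domain of $O^+$ centered at the class of $H$; the Proposition says exactly that this (completed) Dirichlet domain is a finite-sided polytope with rational supporting hyperplanes, plus the corollary for nodal classes.

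First I would verify that $D$ is a fundamental domain for $O^+$ on $\ms C$. For $x$ in the interior $\ms C$ the orbit $O^+x$ is discrete by proper discontinuity, and for $x$ on a rational isotropic ray it lies in a scaled copy of the lattice $N$, hence is discrete too; since $\{y\in\widebar{\ms C}:H\cdot y\le c\}$ is compact for each $c$ (reverse Cauchy--Schwarz bounds $y^2$, and the decomposition $y=\lambda H+w$ with $w\in H^\perp$ then bounds $\lambda$ and $w$ since $H^\perp$ is negative definite), the function $z\mapsto H\cdot z$ attains a minimum on the orbit $O^+x$. A minimizer $y$ satisfies the defining inequalities of $D$; moreover $y\cdot\delta\ge 0$ for every effective nodal class $\delta$ --- else reflecting in $\delta$ would strictly decrease $H\cdot y$ --- so $y\in\Nef(X)_+$, and $O^+\cdot D=\ms C$. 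If $x$ and $\phi(x)$ both lie in the interior of $D$, the defining inequalities applied to each force $H\cdot x=H\cdot\phi(x)$, so $x$ lies on one of the rational hyperplanes $(H-\phi^{-1}H)^\perp$, which --- once $D$ is known to be rational polyhedral --- are exactly its facets; hence the interiors of distinct translates are disjoint.

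The heart of the matter is that $D$ is rational polyhedral, equivalently that $D$ is the intersection of \emph{finitely many} of the rational half-spaces $\{x:H\cdot(\phi(x)-x)\ge 0\}$, $\phi\in O^+$ --- a finite intersection of these automatically lying inside the rational closure of $\ms C$. I would use that $\ms C$ is a self-adjoint homogeneous cone of the kind treated by Ash, and that $O^+$ is an arithmetic lattice, so $\mathbb{H}/O^+$ has finite volume and finitely many cusps. Cut it into a compact thick part and finitely many cusp neighbourhoods: over the thick part only finitely many bisector hyperplanes can occur, because the orbit $O^+\cdot H$ is discrete; near a cusp $v$ --- a rational isotropic ray --- the stabilizer of $v$ in $O^+$ acts on the horosphere as a crystallographic group, whose fundamental domain is a finite polytope by Bieberbach, so again only finitely many half-spaces intervene and $D$ is locally a finite polyhedral cone with apex on $v$. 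Making this cusp analysis precise --- this is Looijenga's observation that adjoining the rational isotropic rays compactifies the arithmetic quotient --- is the step I expect to cost the most effort.

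For the nodal classes, note that $D\subseteq\Nef(X)_+\subseteq\Nef(X)$, which is the closure of a single $W$-chamber. Since $D$ has finitely many facets, only finitely many walls of $\Nef(X)$ --- equivalently, finitely many nodal classes $\delta_1,\dots,\delta_m$ of $\Nef(X)$ --- support a facet of $D$. Given any nodal class $\delta$, some $w\in W$ carries the chamber it bounds to $\Nef(X)$, so $w^{-1}\delta$ is a nodal class of $\Nef(X)$; and since $D$ is also a fundamental domain for $\operatorname{Stab}_{O^+}(\Nef(X))\cong O^+/W$ acting on $\Nef(X)$ and a wall has codimension one, a generic point of $(w^{-1}\delta)^\perp\cap\Nef(X)$ can be moved into $D$, necessarily onto a facet lying in $\partial\Nef(X)$; hence $w^{-1}\delta$ is $\operatorname{Stab}_{O^+}(\Nef(X))$-equivalent to some $\delta_i$, and $\delta$ lies in the $O^+$-orbit of $\delta_i$. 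So there are only finitely many orbits, as asserted.
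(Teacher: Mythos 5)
The paper offers no argument of its own here: after the reductions earlier in this section (Ogus's Torelli theorem, the structure of $W$ and $O^+$, and $\Nef(X)$ being a fundamental domain for $W$ on $\ms C$), it observes that the statement is purely lattice-theoretic, asserts that Sterk's proofs carry over verbatim, and refers the reader to \cite{sterk}. Your proposal reconstructs that argument instead of citing it, and the outline is the right one and essentially Sterk's: read the defining inequalities as a Dirichlet domain centered at $H$; get $O^+\cdot D\supseteq\ms C$ by minimizing $H\cdot y$ over an orbit, using discreteness of orbits plus compactness of $\{y\in\widebar{\ms C}\colon H\cdot y\le c\}$; note that the inequalities for reflections in nodal classes force the minimizer into the nef cone; and deduce finiteness of nodal orbits from rational polyhedrality of $D$.

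Two places where the reconstruction is thinner than what is actually needed. First, the heart of the matter --- that $D$ is cut out by \emph{finitely many} of the half-spaces, i.e.\ is rational polyhedral --- is only sketched and explicitly deferred; this is precisely the content being quoted from Sterk, and neither Sterk nor Looijenga proves it by a hands-on thick--thin decomposition with Bieberbach at the cusps. Rather, one invokes Ash's general theorem (the point of the paper's remark that $\ms C$ is one of Ash's homogeneous self-adjoint cones): for an arithmetic group $\Gamma$ acting on such a cone and a rational $h$ in the cone, $\{x\in\ms C_+\colon (x,h)\le(x,\gamma h)\ \forall\gamma\in\Gamma\}$ is a rational polyhedral fundamental domain. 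Applying this with $\Gamma=O^+$, $h=H$, and then using your reflection observation to see that this set lies in $\Nef(X)_+$, gives the first assertion without any bespoke cusp analysis; as written, your proposal leaves its central step unproved. Second, in the nodal-class step the translated point need not land on a \emph{facet} of $D$, and ``only finitely many walls of $\Nef(X)$ support a facet of $D$'' is not quite the statement you need. What you need, and what rational polyhedrality does give, is that only finitely many $(-2)$-hyperplanes $\delta^\perp$ meet $D$ at points of $\ms C$: the face of $D$ whose relative interior contains such a point spans a sublattice of signature $(1,k)$, so its orthogonal complement is negative definite and contains only finitely many $(-2)$-vectors, and $D$ has finitely many faces. (A smaller caveat: your disjointness-of-interiors argument tacitly assumes $\phi^{-1}H\neq H$; elements of the finite stabilizer of $H$ in $O^+$ preserve $D$, so one should either take $H$ with trivial stabilizer or use the corresponding weaker notion of fundamental domain, exactly as in \cite{sterk}.) With these repairs your argument is correct and is the argument the paper is citing.
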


The reader is referred to \cite{sterk} for the proofs.

\section{The main theorem}\label{S:main}

Let $X$ be a K3 surface over an algebraically closed field $k$ with
characteristic $p\geq 3$. Recall that an element of $\Pic(X)$ is called a
\emph{nodal class\/} if it comes from a smooth rational curve $C\subset X$.

\begin{thm}\label{T:main}
Let $O(\Pic(X))$ be the orthogonal group for $\Pic(X)$ with respect to the
intersection form and let $\Gamma$ be the subgroup of $O(\Pic(X))$ consisting of
elements preserving the nef cone.
\begin{enumerate}
\item The natural map $\Aut(X) \rightarrow \Gamma$ has finite kernel and
cokernel.
\item The group $\Aut(X)$ is finitely generated.
\item The action of $\Aut(X)$ on $\Nef(X)$ has a rational polyhedral fundamental
domain.
\item The set of orbits of $\Aut(X)$ in the nodal classes of $X$ is finite.
\end{enumerate}
\end{thm}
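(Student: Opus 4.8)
The plan is to split into the three cases outlined in the introduction — finite height, Picard number $22$ (Shioda-supersingular), and infinite height with $\rho < 22$ — and in each case reduce to a situation where the result is already known, transporting the conclusion along a specialization map whose equivariance is controlled by Theorem \ref{specialize automorphisms} and Corollary \ref{C:nef preserved}.

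First I would dispose of the finite-height case. By the Corollary to Proposition \ref{finite height snc}, $X$ is the closed fiber of a smooth projective relative $K3$ surface $\mc X \to \spec W$ for which the restriction $\Pic(\mc X) \to \Pic(X)$ is an isomorphism; base-changing to a Henselian dvr and passing to $\widebar K$, the specialization map $\sp : \Pic(\mc X_{\widebar K}) \to \Pic(X)$ is an isomorphism. Theorem \ref{specialize automorphisms} then gives an injection $\sigma : \Aut(\mc X_{\widebar K}) \hookrightarrow \Aut(X)$ compatible with $\sp$, and Corollary \ref{C:nef preserved} says $\sp$ identifies the nef cones and ample cones. Over $\widebar K \subset \C$ (or after a further comparison with a complex lift) Sterk's theorem applies: $\Aut(\mc X_{\widebar K})$ has finite index in $\Gamma$, a rational polyhedral fundamental domain on the nef cone, and finitely many orbits on nodal classes. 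The only thing to check is that $\sigma$ is surjective up to finite index — equivalently that every automorphism of $X_k$ that lies in $\Gamma$ (and hence is ``realized'' by a nef-cone isometry) actually deforms; here one uses that $\Aut(X_k) \to \Gamma_k$ and $\Aut(\mc X_{\widebar K}) \to \Gamma_{\widebar K}$ both have finite kernel and cokernel (by the same Matsusaka–Mumford/Torelli-type input used classically) and that $\sp$ identifies the two copies of $\Gamma$, so finiteness of the cokernel of $\sigma$ follows formally. Then statements (1)--(4) for $X$ are inherited from the ones for $\mc X_{\widebar K}$.

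For Picard number $22$ I would invoke Section \ref{S:Shioda} directly: Ogus's crystalline Torelli theorem identifies $\Aut(X)$ with the subgroup of $O^+$ preserving $K \subset N \otimes k$, Proposition \ref{P:ogus gp} shows $W$ (reflections in $(-2)$-curves) is normal with $\Aut(X) \ltimes W$ of finite index in $O^+$ and $\Nef(X)$ a fundamental domain for $W$ on $\ms C$, and Sterk's arguments (quoted verbatim) produce a rational polyhedral fundamental domain for $O^+$ on $\ms C$ contained in $\Nef(X)$ together with finitely many $O^+$-orbits on nodal classes. Intersecting that domain appropriately with the $\Aut(X)$-coset structure gives (3); (1) and (2) follow since $O^+$ is an arithmetic group acting on a rational polyhedral cone modulo a finitely generated reflection group, hence finitely generated, and $\Aut(X)$ is a finite-index subgroup; (4) is the finiteness of orbits on nodal classes, which descends from $O^+$ to the finite-index subgroup $\Aut(X)$.

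The remaining case is infinite height with $\rho < 22$. By Section \ref{S:violation}, Proposition \ref{P:finitize the height}, there is a smooth projective relative $K3$ surface $\mc X$ over $k\[t\]$ with $\mc X_0 \cong X$, with $\Pic(\mc X) \to \Pic(X)$ an isomorphism, and with $\mc X_\eta$ of finite height. Now one runs the specialization argument again — Theorem \ref{specialize automorphisms} and Corollary \ref{C:nef preserved} apply over the Henselization of $k\[t\]$ — to transport (1)--(4) from the generic fiber, which is covered by the finite-height case just handled, down to $X$. I expect the main obstacle to be the \emph{cokernel} half of statement (1) in the specialization step: showing that $\sigma : \Aut(X_{\widebar K}) \to \Aut(X_k)$ has finite cokernel, i.e. that not too many new automorphisms appear on the special fiber. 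The key point is that both groups have finite-index image in the common group $\Gamma$ (the nef-cone-preserving isometries of the Picard lattice, identified on the two sides by $\sp$), so $\mathrm{coker}(\sigma)$ injects into $\Gamma / \mathrm{image}(\Aut(X_{\widebar K}))$, which is finite; making this precise requires knowing the cokernel of $\Aut \to \Gamma$ is finite on \emph{both} fibers, which for the generic fiber is part of the finite-height/complex case and for the special fiber is what one is trying to prove — so one must instead argue that $\sp$-equivariance forces the image of $\sigma$ to have finite index in $\Gamma_k$ \emph{because} it does upstairs, and then deduce the cokernel statement for $X_k$ a posteriori. Once (1) is secured, (2)--(4) for $X$ follow by pulling back the rational polyhedral fundamental domain and the finite orbit decomposition along the lattice isomorphism $\sp$, using that $\Aut(X_k)$ differs from the image of $\sigma$ by a finite group and hence the domain can be subdivided into finitely many pieces.
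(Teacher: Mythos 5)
Your overall strategy (the three-case split, transporting Sterk's characteristic-zero results along the $\sigma$-equivariant specialization isomorphism of Picard groups and nef cones, and quoting Section \ref{S:Shioda} for Picard number $22$) is the same as the paper's, but two points need repair. First, in the finite-height case you frame the key step as showing that ``$\sigma$ is surjective up to finite index, i.e.\ every automorphism of $X_k$ lying in $\Gamma$ deforms,'' and you justify it by asserting that $\Aut(X_k)\to\Gamma$ has finite kernel and cokernel ``by Torelli-type input'' --- but that finiteness is exactly statement (1) in positive characteristic, which is what you are trying to prove, and no Torelli theorem is available for finite-height surfaces in characteristic $p$. The correct logic, which you do articulate in your last paragraph and should use in the finite-height case as well, runs in the other direction and requires no deformation of automorphisms at all: by $\sigma$-equivariance the image of $\Aut(X_{\widebar K})$ in $\Gamma$, which has finite index by the characteristic-zero case, is \emph{contained} in the image of $\Aut(X_k)$, so the cokernel of $\Aut(X_k)\to\Gamma$ is finite a fortiori; statements (3) and (4) then descend because the nef cones and the nodal classes are identified under specialization by Corollary \ref{C:nef preserved} and Lemma \ref{curves deform}.

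Second, finiteness of the kernel of $\Aut(X)\to\Gamma$ is never established in your proposal, and it does not follow from the specialization argument: specialization only injects $\Aut(X_{\widebar K})$ into $\Aut(X_k)$ and says nothing about automorphisms of the special fiber that act trivially on $\Pic(X_k)$. The paper's argument is characteristic-free and elementary: such an automorphism fixes any very ample class $L$, hence lies in $\Aut(X,L)$, which is of finite type (it sits inside $\mathrm{PGL}_{N+1}$ via the embedding defined by $L$) and discrete (a $K3$ surface has no nonzero vector fields), hence finite. You need some such argument; it is also what makes Schreier's lemma applicable for (2), since $\Aut(X)$ is then an extension of a finite-index subgroup of the finitely generated group $\Gamma$ by a finite group. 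With these two repairs your proof coincides with the paper's.
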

\begin{proof}

When $k$ has characteristic $0$, this is a standard result of Torelli's Theorem
and Sterk's results \cite{sterk}. Now suppose $k$ has positive characteristic
$p$. If we fix a very ample $L$ on $X$, the kernel of the map $\Aut(X)
\rightarrow \Gamma$ is contained in the automorphism group $\Aut(X,L)$ of the
pair (because any such element leaves every invertible sheaf invariant). This latter group is finite type, since it is contained in
$\mathrm{PGL}(N)$ for some $N$, and discrete, since $X$ has no nontrivial vector
fields, thus finite. 

For the remaining claims, we argue as follows.
If $X$ has finite height then we have by Theorem \ref{specialize automorphisms}
and Corollary \ref{C:nef preserved} that $X$ has a lift $X_1$ to an
algebraically closed field of characteristic $0$ such that the specialization
isomorphism $\Pic(X_1)\to\Pic(X)$ is equivariant with respect to the
specialization map $\Aut(X_1)\to\Aut(X)$, and the nef cone specializes to the
nef cone. Since we know all statements for $X_1$, it follows immediately that
the image of $\Aut(X)$ in $\Gamma$ is of finite index, and that $\Gamma$ acts on
$\Nef(X)=\Nef(X_1)$ with a rational polyhedral fundamental domain.
Once we have finiteness of kernel and cokernel, we have that $\Aut(X)$ is
finitely generated because $\Gamma$ is finitely generated (Schreier's
lemma \cite{gps}).

If $X$ is Shioda-supersingular, then the proof is given in Section
\ref{S:Shioda}.
\end{proof}

\begin{remark}
  If we do not assume the Tate conjecture for supersingular K3 surfaces, we can use Section
  \ref{S:violation} to handle case of Artin-supersingular $X$ with Picard number less than 5. By
Proposition 
\ref{P:finitize the height} $X$ is the specialization of a K3
surface $X_1$ 
over an algebraically closed field of characteristic $p$ that has
finite height 
such that the specialization map $\Pic(X_1)\to\Pic(X)$ is an
isomorphism that is 
equivariant with respect to the specialization map
$\Aut(X_1)\to\Aut(X)$. The 
same argument as in the previous paragraph concludes the proof.

\end{remark}

\begin{remark}
  Even in characteristic $0$, the kernel of the map $\Aut(X)\to\Gamma$
  of Theorem \ref{T:main}(i) can be nontrivial. For instance, for a double cover of a plane sextic with Picard rank $1$, the
standard involution acts trivially on $\Pic(X)$.
\end{remark}

Note that a consequence of the argument is that, after choosing a lift to
characteristic $0$ which preserves $\Pic(X)$, the specialization map
$\Aut(X_1)\to\Aut(X)$ has finite index.

Since the finite height argument works for all characteristics, the only
situation not handled by this argument is the case of supersingular K3
surfaces in characteristic $2$. Due to work of Shimada \cite{shimada}, these are
all given as inseparable double covers of $\P^2$; it seems plausible
that a direct analysis along these lines may handle this case as well.

\end{document}